\theoremstyle{plain}
\newtheorem{theorem}{Theorem}
\theoremstyle{definition}
\theoremstyle{remark}
\newtheorem{cons}{Corollary}
\newtheorem{remark}{Remark}
\newcommand{\rank}{\mathrm{rank}\kern 2pt}
\newcommand{\cV}{{\rm vol}}
\algrenewcommand\algorithmiccomment[1]{\{#1\}}
\algnewcommand{\To}{\textbf{to}\ }
\renewcommand{\ref}{\href}
\newcommand*{\myfont}{\fontfamily{phv}\selectfont}
\begin{document}

\title{Close to optimal column approximations with a single SVD}

\author[1]{Osinsky~A.I.}

\affil[1]{Skolkovo Institute of Science and Technology, Moscow, Russia}

\maketitle

\begin{abstract}
The best column approximation in the Frobenius norm with $r$ columns has an error at most $\sqrt{r+1}$ times larger than the truncated singular value decomposition. Reaching this bound in practice involves either expensive random volume sampling or at least $r$ executions of singular value decomposition. In this paper it will be shown that the same column approximation bound can be reached with only a single SVD (which can also be replaced with approximate SVD). As a corollary, it will be shown how to find a highly nondegenerate submatrix in $r$ rows of size $N$ in just $O(Nr^2)$ operations, which mostly has the same properties as the maximum volume submatrix.
\end{abstract}

\section{Introduction}

This paper studies how to construct a good approximation of rank $r$ of the matrix $A \in \mathbb{C}^{M \times N}$ based on $r$ columns $C \in \mathbb{C}^{M\times r}$ (column approximation) or columns $C$, rows $R \in \mathbb{C}^{r\times N}$ and the submatrices $\hat A \in \mathbb{C}^{r \times r}$ at their intersection (skeleton approximation).

The task of finding optimal columns is often discussed in data analysis. Usually, the following problem is considered, called the Column Subset Selection Problem (CSS/CSSP) \cite{CCAlargecoef,CCAsvd}:
\[
  \left\| A - CC^+ A \right\|_F \to \min\limits_C.
\]
Note, however, that choosing optimal columns is an NP-complete problem \cite{ColumnHard}, although it is possible to speed up the exhaustive search using A*-like algorithm \cite{CCAstar}. Other popular algorithms include greedy search \cite{GreedyCCA3} and iterative update of the selected columns one by one \cite{CCAiter}.

In \cite{bestCW} it was shown that it is possible to construct a $CC^+A$ approximation of rank $r$ with an error coefficient in the Frobenius norm not greater than $\sqrt{r+1}$:
\begin{equation}\label{eq:colopt}
  \left\| A - CC^+A \right\|_F \leqslant \sqrt{r+1} \left\| A - A_r \right\|_F,
\end{equation}
where the error is estimated in terms of the truncated singular value decomposition $A_r$ of the matrix $A$, $\rank A_r = r$.

It was also proved in \cite{bestCW} that the coefficient $\sqrt{r+1}$ is optimal and cannot be reduced. However, the best known algorithm of finding columns (not necessarily optimal) satisfying \eqref{eq:colopt} requires $O\left(r M^2 N\right)$ operations \cite{Alice2} (authors also show there how a good skeleton approximation can be constructed with the same complexity bound). The same number of operations is required by a randomized algorithm based on averaging over the squared volume of the columns $C$, chosen randomly \cite{bestCW}, where the expected error also does not exceed $\sqrt{r+1} \left\| A - A_r \right\|_F$.

In both cases, the cost of the algorithms is excessively high. Here we will construct a method for obtaining an estimate with a coefficient $\sqrt{r+1}$ using only one singular decomposition, and therefore having full complexity $O\left( MN \min\left(M, N \right) \right)$. In addition, we will show how in $O \left( MNr \right)$ operations an arbitrary approximation $Z, \rank Z = r$ of the matrix $A$ can be turned into a column approximation with an error of at most $\sqrt{r+1} \left\|A - Z\right\|_F$ and into a skeleton decomposition with an error at most $(r+1) \left\|A - Z\right\|_F$, which coincides with the best known estimate from \cite{me-fnorm}.

\section{Construction of close to optimal column approximation}

\begin{theorem}\label{th:fastbest}
Let $A, Z \in \mathbb{C}^{M \times N}$, $\rank Z= r$ be given. Then in $O \left( MNr \right)$ operations it is possible to find columns $C \in \mathbb{C}^{M \times r}$ of the matrix $A$ and weights $W \in \mathbb{C}^{r \times N}$ based on the matrix $Z$, such that simultaneously
\begin{equation}\label{eq:cwf}
  \left\| A - C C^+ A \right\|_F \leqslant \left\| A - CW \right\|_F \leqslant \sqrt{ r + 1 } \left\| A - Z \right\|_F
\end{equation}
and
\begin{equation}\label{eq:cw2}
  \left\| A - CC^+A \right\|_2^2 \leqslant \left\| A - CW \right\|_2^2 \leqslant \| A - Z\|_2^2 + r \left\| A - Z \right\|_F^2 \leqslant \left( 1 + r \left( \min \left(M, N \right) - r \right) \right) \| A - Z \|_2^2.
\end{equation}
\end{theorem}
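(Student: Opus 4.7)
The plan is to reduce the problem to column selection on the rank-$r$ factor of the orthogonal projection of $A$ onto $\mathrm{col}(Z)$, then derive the two bounds from a single algebraic identity together with a residual-aware maxvol swap inequality.

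First, I would compute $U_Z \in \mathbb{C}^{M \times r}$ spanning $\mathrm{col}(Z)$ via one thin SVD of $Z$ in $O(MNr)$ operations (or $O(Mr^2 + Nr^2)$ if $Z$ is already factored). Setting $Y := U_Z^* A \in \mathbb{C}^{r \times N}$ implicitly replaces $Z$ by $Z_\ast := U_Z Y = U_Z U_Z^* A$, which satisfies $\|A - Z_\ast\|_F \leq \|A - Z\|_F$ and the crucial orthogonality $U_Z^* E = 0$ for $E := A - Z_\ast$. In Frobenius norm it thus suffices to prove the bound with $Z_\ast$ in place of $Z$.

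Next, I would apply a maxvol / dominant-$r$ column selection to $Y$ (possibly augmented by an extra row encoding the norms $\|E_j\|$) in $O(Nr^2)$ iterations, obtaining indices $J \subset [N]$, $|J| = r$, with $\hat Y = Y(:,J)$ invertible. Define $W := \hat Y^{-1} Y$ and $C := A(:,J)$. The identity $A_J = U_Z \hat Y + E_J$ gives
\[
CW = U_Z Y + E_J W = Z_\ast + E_J W, \qquad A - CW = E - E_J W,
\]
with $(A - CW)_{\cdot,j} = 0$ for $j \in J$.

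For the Frobenius bound I would exploit $U_Z^* E = 0$ to rewrite $E_J = -E_{J^c} M(J^c,:)$ in terms of $E_{J^c}$ and a coefficient matrix $M$ built from the maxvol choice, reducing $\|E - E_J W\|_F^2$ to $\tr(\tilde K^2 G)$ for $\tilde K \succeq I$ derived from $M$ and $G := E_{J^c}^* E_{J^c}$, while $\|E\|_F^2 = \tr(\tilde K G)$; the maxvol swap inequality should then give the operator bound $\tilde K^2 \preceq (r+1)\tilde K$, yielding $\|E - E_J W\|_F^2 \leq (r+1)\|E\|_F^2$. For the spectral bound I would use the decomposition $(A - CW)v = [E_{J^c},\,-E_J]\bigl(\begin{smallmatrix}v_{J^c} \\ W_{J^c} v_{J^c}\end{smallmatrix}\bigr)$ together with $\sigma_1([E_{J^c}\ E_J]) = \|E\|_2$ (the latter is a column permutation of $E$) to obtain $\|A - CW\|_2^2 \leq \|E\|_2^2(1 + \|W_{J^c}\|_2^2)$, and a residual-aware maxvol bound on $\|W_{J^c}\|_2^2 \|E\|_2^2$ to conclude $\|A - CW\|_2^2 \leq \|E\|_2^2 + r\|E\|_F^2$; the final chain $\leq (1 + r(\min(M,N) - r))\|E\|_2^2$ follows from $\|E\|_F^2 \leq \rank(E)\,\|E\|_2^2 \leq (\min(M,N) - r)\|E\|_2^2$. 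The main obstacle is establishing the operator bound $\tilde K^2 \preceq (r+1)\tilde K$ and its spectral counterpart from the column-selection criterion --- this is a sharp $(r+1)$-projective-volume style inequality, and packaging it so that only a single SVD of $Z$ (rather than of $A$) is needed is where the paper's novelty sits.
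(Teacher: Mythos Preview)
Your reduction projects onto the \emph{column} space of $Z$ (via $U_Z$), whereas the paper projects onto the \emph{row} space (via the right singular vectors $V\in\mathbb{C}^{r\times N}$ of $Z$). This is not a cosmetic difference: the step where you ``exploit $U_Z^*E=0$ to rewrite $E_J=-E_{J^c}M(J^c,:)$'' is simply false. The relation $U_Z^*E=0$ imposes $r$ linear constraints on \emph{each individual column} of $E$ (forcing $E_{:,j}\in\mathrm{col}(U_Z)^\perp$); it gives no linear relation \emph{among} the columns, so $E_J$ cannot be expressed through $E_{J^c}$. Consequently your matrix $\tilde K$ is never defined, and the target inequality $\tilde K^2\preceq(r+1)\tilde K$ has no content. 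By contrast, the paper's $\tilde A=A(I-V^*V)$ satisfies $\tilde A V^*=0$, an orthogonality on the \emph{right}: this is exactly what yields $r$ linear relations among the columns and, more importantly, makes the rank-one update at each greedy step orthogonal to the current residual, so the Frobenius error splits by Pythagoras and telescopes to the sharp $(r+1)$ factor.

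A second gap is the column-selection rule. Maxvol on $Y=U_Z^*A$ controls the entries of $W=\hat Y^{-1}Y$ but is blind to $E$; it cannot deliver the mixed bound $\|A-CW\|_2^2\le\|E\|_2^2+r\|E\|_F^2$, which requires the selection to trade off the residual column norms $\|E_{:,j}\|$ against the ``denominator'' coming from $Z$. Your parenthetical fix (augmenting $Y$ by a row of $\|E_{:,j}\|$) changes the size to $(r+1)\times N$, so maxvol would pick $r+1$ columns, not $r$, and you have not said how to resolve this. The paper avoids maxvol entirely: at step $k$ it picks $j$ minimising the ratio $\|\tilde A^{(k-1)}_{:,j}\|_2/\|V_{k:r,j}\|_2$, then uses the averaging inequality $\min_j \alpha_j/\beta_j\le(\sum\alpha_j)/(\sum\beta_j)$ with $\sum_j\|V_{k:r,j}\|_2^2=r-k+1$ to get $\|\tilde A^{(k)}\|_{2,F}^2\le\|\tilde A^{(k-1)}\|_{2,F}^2+\|\tilde A^{(k-1)}\|_F^2/(r-k+1)$, which after $r$ steps gives both \eqref{eq:cwf} and \eqref{eq:cw2} simultaneously. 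The rank bound $\rank\tilde A\le\min(M,N)-r$ (again a consequence of the right-side orthogonality) then yields the last inequality in \eqref{eq:cw2}.
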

\begin{proof}
Let us use the rows $V \in \mathbb{C}^{r \times N}$ of the matrix of the right singular vectors from the truncated singular value decomposition $Z = U \Sigma V$ and orthogonalize the matrix $A$ to them:
\[
  \tilde A = A - A V^* V.
\]
Since $Z - Z V^* V = 0$,
\begin{equation}\label{eq:tildea}
  \left\| \tilde A \right\|_{2,F} = \left\| A - A V^* V \right\|_{2,F} = \left\| \left( A - Z \right) \left(I - V^* V \right) \right\|_{2,F} \leqslant \left\| A - Z \right\|_{2,F}.
\end{equation}

Hereinafter we simultaneously use both lower indices $2$ and $F$, when the expression is valid both in the spectral norm and in the Frobenius norm.

We will build our column approximation using the weights $W = \hat V^{-1} V$, where the submatrix $\hat V \in \mathbb{C}^{r \times r}$ corresponds to the columns $C$. If we denote by $\tilde C = C - A V^*\hat V$ the corresponding columns of $\tilde A$, then
\[
\begin{aligned}
  \left\| A - CW \right\|_{2,F} & = \left\| \tilde A + A V^* V - \tilde C W - A V^* \hat V W \right\|_{2,F} \\
  & = \left\| \tilde A + A V^* V - \tilde C W - A V^* \hat V \hat V^{-1} V \right\|_{2,F} \\
  & = \left\| \tilde A - \tilde C W \right\|_{2,F}.
\end{aligned}
\]
Note that for now calculating $\tilde A$ required only $O \left( MNr \right)$ operations, since the singular value decomposition of a rank-$r$ matrix can be computed quickly.

Without loss of generality, we can assume that we will eventually select the first $r$ columns. Next we will describe how exactly to choose them. Note also that the weights $W$ and $\tilde A$ remain unchanged when $V$ is replaced by any $V^Q = QV$, where $Q \in \mathbb{C}^{r \times r}$ is an arbitrary unitary matrix.

Let us choose $Q$ so the matrix $\hat V^Q$ is upper triangular. We are going to add the columns one by one, which will allow us to prove the estimate by induction. At any step $k \geqslant 1$ (without loss of generality, we assume that the first $k-1$ columns have been selected), the first $k-1$ columns of $\tilde A$ will be zero:
\[
  \tilde C - \tilde C \hat W = \tilde C - \tilde C \hat V^+ \hat V = 0.
\]

If we denote
\[
  \tilde A^{(k-1)} = \tilde A - C^{(k-1)} W^{(k-1)}, \quad C^{(k-1)} \in \mathbb{C}^{M \times (k-1)}, \; W^{(k-1)} \in \mathbb{C}^{(k-1) \times N}.
\]
then (with $i$:$j$ we denote choosing rows/columns from $i$ to $j$) approximation at the next step is as follows:
\[
\begin{aligned}
  \tilde A^{(k)} & = \tilde A - C^{(k)} W^{(k)} \\
  & = \tilde A - C^{(k)} \hat V^{(k),Q,-1} V_{1:k,1:N}^Q \\
  & = \tilde A - \left[ \tilde A_{1:M,1:k-1} \; \tilde A_{1:M,k} \right] \left[ {\begin{array}{*{20}{c}}
   \hat V^{(k-1),Q,-1} & -\hat V^{(k-1),Q,-1} V_{1:k-1,k}^Q  / \hat V_{kk}^Q \\ 
   0 & 1 / \hat V_{kk}^Q  \\ 
\end{array} } \right] V_{1:k,1:N}^Q \\
  & = \tilde A - C^{(k-1)} \hat V^{(k-1),Q,-1} V_{1:k-1,1:N}^Q + C^{(k-1)} \hat V^{(k-1),Q,-1} V_{1:k-1,k}^Q \frac{V_{k,1:N}^Q}{\hat V_{kk}^Q} - \tilde A_{1:M,k} \frac{V_{k,1:N}^Q}{\hat V_{kk}^Q} \\
  & = \tilde A - C^{(k-1)} W^{(k-1)} + C^{(k-1)} W_{1:k-1,k}^{(k-1)} \frac{V_{k,1:N}^Q}{\hat V_{kk}^Q} - \tilde A_{1:M,k} \frac{V_{k,1:N}^Q}{\hat V_{kk}^Q} \\
  & = \tilde A - C^{(k-1)} W^{(k-1)} - \tilde A_{1:M,k}^{(k-1)} \frac{V_{k,1:N}^Q}{\hat V_{kk}^Q} \\
  & = \tilde A^{(k-1)} - \frac{1}{\hat V_{kk}^Q} \tilde A_{1:M,k}^{(k-1)} V_{k,1:N}^Q.
\end{aligned}
\]
In other words, the error at the $k$-th step $\tilde A^{(k)}$ is the approximation error of $\tilde A^{(k-1)}$ using its $k$-th column $\tilde A_{1:M,k}^{(k-1)}$ and has the following form:
\begin{equation}\label{eq:akbasic}
  \tilde A^{(k)} = \tilde A^{(k-1)} - \frac{1}{\hat V_{kk}^Q} \tilde A_{1:M,k}^{(k-1)} V_{k,1:N}^Q.
\end{equation}
Denote by $\bar V^Q \in \mathbb{C}^{(r-k+1) \times N}$ the last $r-k+1$ rows of $V^Q$. Then \eqref{eq:akbasic} can be written as
\[
  \tilde A^{(k)} = \tilde A^{(k-1)} - \tilde A_{1:M,k}^{(k-1)} \bar V_{1:r-k+1,k}^{Q,+} \bar V^Q.
\]

Now we greedily choose the column $j$ (which will take place of the $k$-th column), which minimizes the Frobenius norm error at the $k$-th step. To do that, we can use the condition
\begin{equation}\label{eq:jopt}
  j = \mathop{\arg\min}\limits_{j \geqslant k} \left\| \tilde A_{1:M,j}^{(k-1)} \right\|_2 / \| \bar V_{1:r-k+1,j}^Q \|_2.
\end{equation}

Since the first $k-1$ columns of $V^Q$ contain $(k-1) \times (k-1)$ right triangular submatrix, we can rewrite the denominator in \eqref{eq:jopt} as
\[
\begin{aligned}
  \| \bar V_{1:r-k+1,j}^Q \|_2 & = \| V_{k:r,j}^Q \|_2 \\
  & = \| \left( I - V_{1:r,1:k-1}^Q V_{1:r,1:k-1}^{Q,+} \right) V_{1:r,j}^Q \|_2 \\
  & = \| \left( I - Q V_{1:r,1:k-1} V_{1:r,1:k-1}^+ Q^* \right) Q V_{1:r,j} \|_2 \\
  & = \| Q \left( I - V_{1:r,1:k-1} V_{1:r,1:k-1}^+ \right) V_{1:r,j} \|_2 \\
  & = \| \left( I - V_{1:r,1:k-1} V_{1:r,1:k-1}^+ \right) V_{1:r,j} \|_2
\end{aligned}
\]
to show that it does not actually depend on yet unknown transformation $Q$. In practice, transformation $Q$ can be constructed using Householder reflections, obtained after each new column is added.

Now note that
\begin{equation}\label{eq:sum1}
\sum\limits_{j=1}^N \left\| \tilde A_{1:M,j}^{(k-1)} \right\|_2^2 = \left\| \tilde A^{(k-1)} \right\|_F^2
\end{equation}
and
\begin{equation}\label{eq:sum2}
\sum\limits_{j=1}^N \left\| \bar V_{1:r-k+1,j}^Q \right\|_2^2 = \left\| \bar V^Q \right\|_F^2 = r - k + 1.
\end{equation}
We then use the general inequality
\begin{equation}\label{eq:avg}
  \frac{\sum\limits_{j = 1}^N \alpha_j}{\sum\limits_{j = 1}^N \beta_j} \geqslant \frac{\sum\limits_{j = 1}^N \min\limits_k \frac{\alpha_k}{\beta_k} \beta_j }{\sum\limits_{j = 1}^N \beta_j} = \min\limits_k \frac{\alpha_k}{\beta_k},
\end{equation}
which holds whenever $\sum\limits_j \beta_j$ is positive and all $\alpha_j$ are nonnegative. In our case, for the chosen column $j$, which minimizes \eqref{eq:jopt}, we can substitute the ratio of \eqref{eq:sum1} and \eqref{eq:sum2} to estimate the minimum from above:
\[
  \left\| \tilde A_{1:M,k}^{(k-1)} \bar V_{1:r-k+1,k}^{Q,+} \bar V^Q \right\|_2^2 = \left\| \tilde A_{1:M,k}^{(k-1)} \bar V_{1:r-k+1,k}^{Q,+} \right\|_2^2 = \frac{\left\| \tilde A_{1:M,k}^{(k-1)} \right\|_2^2}{\left\| \bar V_{1:r-k+1,k}^{Q} \right\|_2^2} \leqslant \frac{\left\| \tilde A^{(k-1)} \right\|_F^2}{r - k + 1}.
\]

The rows of $\tilde A{(k-1)}$ are orthogonal to $\bar V^Q$:
\[
  \tilde A^{(k-1)} \bar V^{Q,*} = \tilde A V_{k:r,1:N}^{Q,*} - C^{(k-1)} \hat V^{(k),Q,-1} V_{1:k-1,1:N}^Q V_{k:r,1:N}^{Q,*} = 0 - 0 = 0,
\]
since the rows $V^Q$ are orthogonal to each other, and the rows of $\tilde A$ are orthogonal to $V$ by construction. Using this fact, the error at the next step can be estimated as
\[
\begin{aligned}
  \left\| \tilde A^{(k-1)} - \tilde A_{1:M,k}^{(k-1)} \bar V_{1:r-k+1,k}^{Q,+} \bar V^Q \right\|_{2,F}^2 & \leqslant \left\| \tilde A^{(k-1)} \right\|_{2,F}^2 + \left\| \tilde A_{1:M,k}^{(k-1)} \bar V_{1:r-k+1,k}^{Q,+} \bar V^Q \right\|_2^2 \\
  & = \left\| \tilde A^{(k-1)} \right\|_{2,F}^2 + \left\| \tilde A_{1:M,k}^{(k-1)} \right\|_2^2 / \left\| \bar V_{1:r-k+1,k}^{Q} \right\|_2^2 \\
  & \leqslant \left\| \tilde A^{(k-1)} \right\|_{2,F}^2 + \frac{1}{r-k+1} \left\| \tilde A^{(k-1)} \right\|_{F}^2.
\end{aligned}
\]
In the Frobenius norm the first inequality is actually equality, which means the choice of $j$ \eqref{eq:jopt} indeed minimizes the Frobenius norm.

We conclude, that the error at the next step is determined by the error in the previous step. Substituting previous $k-1$ steps, we get
\[
  \left\| \tilde A^{(k)} \right\|_{2,F}^2 = \left\| \tilde A^{(k-1)} - \tilde A_{1:M,k}^{(k-1)} \bar V_{1:r-k+1,k}^{Q,+} \bar V^Q \right\|_{2,F}^2 \leqslant \left\| \tilde A \right\|_{2,F}^2 + \frac{k}{r-k+1} \left\| \tilde A \right\|_{F}^2.
\]
For $k = r$, using \eqref{eq:tildea}, we obtain \eqref{eq:cwf} and \eqref{eq:cw2}. Each step of the algorithm requires $O \left( MN \right)$ operations to go from $A^{(k-1)}$ to $A^{(k)}$ and to find the optimal $j$ \eqref{eq:jopt}, so the total cost is $O \left( MNr \right)$. To get the second part of \eqref{eq:cw2}, we can use the inequality
\begin{equation}\label{eq:tildeaf2}
  \left\| \tilde A \right\|_F^2 \leqslant \rank \tilde A \cdot \left\| \tilde A \right\|_2^2 \leqslant \left( \min \left(M, N \right) - r \right) \left\| \tilde A \right\|_2^2 \leqslant \left( \min \left(M, N \right) - r \right) \left\| A - Z \right\|_2^2.
\end{equation}

Inequalities for $CC^+A$ approximation follow from the fact that the choice $W = C^+A$ minimizes the error for any fixed columns $C$:
\[
\begin{aligned}
  \left\| A - CW \right\|_{2,F} & = \left\| CC^+ \left( A - CW \right) + \left( I - CC^+ \right) \left( A - CW \right) \right\|_{2,F} \\
  & \geqslant \left\| \left( I - CC^+ \right) \left( A - CW \right) \right\|_{2,F} \\
  & = \left\| A - CC^+ A + CW - CW \right\|_{2,F} \\
  & = \left\| A - CC^+ A \right\|_{2,F}.
\end{aligned}
\]
\end{proof}
\begin{cons}
If $Z = A_r$, where $A_r$ is the truncated singular value decomposition of $A$, then we get the bound
\[
  \left\| A - CC^+ A \right\|_F \leqslant \sqrt{r+1} \left\|A - A_r \right\|_F,
\]
which coincides with the best bound from \cite{bestCW}. And, as we mentioned earlier, the coefficient $\sqrt{r+1}$ cannot be improved. Constructing such an approximation requires one singular value decomposition and therefore has $O\left(\min\left(M, N\right) M N \right)$ complexity.
\end{cons}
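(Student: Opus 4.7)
The plan is a direct specialisation of Theorem~1 to the case $Z = A_r$. Since $A_r$ has rank at most $r$ by construction, it satisfies the hypothesis of the theorem, and the Frobenius inequality proved there specialises to
\[
  \left\| A - CC^+ A \right\|_F \leqslant \sqrt{r+1}\,\left\| A - A_r \right\|_F,
\]
which is exactly the stated bound. Optimality of the constant $\sqrt{r+1}$ is already established in \cite{bestCW} and requires no new argument here.

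For the complexity claim I would split the total work into two stages. The first stage computes the truncated SVD $A_r$ of $A$; this is a single dense SVD and costs $O\bigl(\min(M,N)\,MN\bigr)$ operations. The second stage feeds $Z = A_r$ into the column-selection procedure from the proof of Theorem~1, which the theorem guarantees to run in $O(MNr)$ operations. Since $r \leqslant \min(M,N)$, the first stage dominates and the overall complexity is $O\bigl(\min(M,N)\,MN\bigr)$, matching the claim.

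I do not expect any real obstacle: the heavy lifting was already done in Theorem~1, so the corollary is essentially a substitution. The only point worth making explicit is that the rows $V$ of right singular vectors demanded by the column-selection routine are precisely the top-$r$ right singular vectors of $A$ produced by the same SVD, so no second decomposition is ever performed and the two stages share all of their linear-algebra work. With this observation the corollary follows immediately.
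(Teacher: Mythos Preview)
Your proposal is correct and matches the paper's treatment: the corollary is stated immediately after Theorem~\ref{th:fastbest} without a separate proof, since it is indeed just the specialisation $Z = A_r$ together with the observation that one SVD dominates the $O(MNr)$ column-selection cost. Your remark that the right singular vectors needed by the algorithm are already produced by that single SVD is exactly the point behind the paper's ``only one singular value decomposition'' claim.
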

\begin{remark}
Derandomization of volume sampling from \cite{Alice2}, which speeds up the algorithm from \cite{CWexist}, requires $O \left( r M^2 N \right)$ operations and guarantees
\[
  \left\| A - CC^+ A \right\|_F \leqslant \sqrt{r+1} \sqrt{\frac{\sum\limits_{i_1 < \ldots < i_{r+1}} \sigma_{i_1}^2 \left( A \right) \cdot \ldots \cdot \sigma_{i_{r+1}}^2 \left( A \right)}{\sum\limits_{i_1 < \ldots < i_r} \sigma_{i_1}^2 \left( A \right) \cdot \ldots \cdot \sigma_{i_r}^2 \left( A \right)}},
\]
where the right-hand-side contains sums of products of squares of different singular values of $A$ from all sets of size $r+1$ (in the numerator) and size $r$ (in the denominator). This bound is never higher than $\sqrt{r+1} \left\|A - A_r \right\|_F$, which is how the latter bound was first obtained in \cite{CWexist}. The coefficient also becomes close to 1, when more than $r$ columns are selected \cite{newCCA}. The bound of derandomized volume sampling coincides with ours in the limit $\sigma_r \left( A \right) / \sigma_{r+1} \left( A \right) \to \infty$.
\end{remark}

The algorithm, which selects $r$ columns, is based on the criterion \eqref{eq:jopt} and the update \eqref{eq:akbasic}. It is written as Algorithm \ref{alg:CCA}. The current right triangular submatrix is obtained using Householder reflections.

\begin{algorithm}[!ht]
\caption{}
\label{alg:CCA}
\begin{algorithmic}[1]
\Require{Matrix $A \in \mathbb{C}^{M \times N}$, approximation $Z \in \mathbb{C}^{M \times N}$ of rank $r$.}
\Ensure{Columns $C \in \mathbb{C}^{M \times r}$ and weights $W \in \mathbb{C}^{r \times N}$, which satisfy Theorem \ref{th:fastbest}.}
  \State Calculate $V \in \mathbb{C}^{r \times N}$ -- right singular vectors of $Z$
  \State $\tilde A := A - A V^* V$
\For{$k := 1$ \To $r$}
  \State $j = \mathop{\arg\min} \limits_{j \geqslant k} \left\| \tilde A_{1:M,j} \right\|_2 / \| V_{k:r,j} \|_2$
  \State Swap columns $k$ and $j$ in $\tilde A$ and in $V$
  \State $v := V_{k:r,k}$
  \State $v_1 := v_1 + e^{i \arg v_1} \left\| v \right\|_2$
  \State $v := v / \left\| v \right\|_2$
  \State $V_{k:r,1:N} := V_{k:r,1:N} - 2 vv^* V_{k:r,1:N}$
  \State $\tilde A := \tilde A - \frac{1}{V_{kk}} \tilde A_{1:M,k} V_{k,1:N}$
\EndFor
\State $W = V_{1:r,1:r}^{-1} V$
\end{algorithmic}
\end{algorithm}

Due to the arbitrariness of the initial approximation $Z$, we can apply Theorem \ref{th:fastbest} twice to obtain a skeleton approximation.

\begin{theorem}\label{th:fastcross}
Let $A, Z \in \mathbb{C}^{M \times N}$, $\rank Z = r$ be given. Then in $O \left( MNr \right)$ operations it is possible to find rows $R \in \mathbb{C}^{r \times N}$ and columns $C \in \mathbb{C}^{M \times r}$ of the matrix $A$, such that simultaneously
\begin{equation}\label{eq:ccarrf}
  \left\| A - C C^+ A R^+ R \right\|_F \leqslant \sqrt{2r + 2} \left\| A - Z \right\|_F
\end{equation}
and
\begin{equation}\label{eq:ccarr2}
  \left\| A - C C^+ A R^+ R \right\|_2 \leqslant \sqrt{2 + 2r \left( \min \left( M, N \right) - r \right)} \| A - Z \|_2.
\end{equation}
In the same number of operations it is also possible to select the same rows $R \in \mathbb{C}^{r \times N}$, but maybe different columns $C \in \mathbb{C}^{M \times r}$, such that simultaneously
\begin{equation}\label{eq:carf}
  \left\| A - C \hat A^{-1} R \right\|_F \leqslant \left( r + 1 \right) \left\| A - Z \right\|_F
\end{equation}
and
\begin{equation}\label{eq:car2}
  \left\| A - C \hat A^{-1} R \right\|_2 \leqslant \sqrt{  1 + r \left( r + 2 \right) \left( \min \left(M, N \right) - r \right) } \| A - Z \|_2,
\end{equation}
where $\hat A \in \mathbb{C}^{r \times r}$ is the submatrix at the intersection of rows $R$ and columns $C$.
\end{theorem}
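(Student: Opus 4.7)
The plan is to derive both parts of the theorem from two applications of Theorem \ref{th:fastbest}. As a common first step, applying Theorem \ref{th:fastbest} to $A^*$ and $Z^*$ (transposed, so that ``columns'' become rows) produces $r$ rows $R$ of $A$ with $\|A - AR^+R\|_F \leq \sqrt{r+1}\,\|A-Z\|_F$ and $\|A - AR^+R\|_2^2 \leq \|A-Z\|_2^2 + r\|A-Z\|_F^2$. These rows will be reused in both halves, and each application of Theorem \ref{th:fastbest} costs $O(MNr)$.

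For \eqref{eq:ccarrf}--\eqref{eq:ccarr2}, I apply Theorem \ref{th:fastbest} a second time, independently, to $(A, Z)$ to obtain columns $C$ with the analogous estimates. Setting $P_C = CC^+$ and $P_R = R^+R$, I decompose
$A - P_C A P_R = (I - P_C)A + P_C A(I - P_R).$
The two summands are Frobenius-orthogonal because $(I-P_C)P_C = 0$ makes their trace inner product vanish, so $\|A - P_C A P_R\|_F^2 \leq \|A - CC^+A\|_F^2 + \|A - AR^+R\|_F^2 \leq 2(r+1)\|A-Z\|_F^2$, which is \eqref{eq:ccarrf}. For the spectral norm I split any input vector $x = x_1 + x_2$ along $\operatorname{range}(I-P_R) \oplus \operatorname{range}(P_R)$, so that $(A - P_C A P_R)x = A(I-P_R)x_1 + (I - P_C)Ax_2$; Cauchy--Schwarz in the form $(ab+cd)^2 \leq (a^2+c^2)(b^2+d^2)$ then yields $\|A - P_C A P_R\|_2^2 \leq \|A - CC^+A\|_2^2 + \|A - AR^+R\|_2^2$, and substituting \eqref{eq:cw2} gives \eqref{eq:ccarr2}.

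For \eqref{eq:carf}--\eqref{eq:car2} I keep the same rows $R$ but re-select columns by applying Theorem \ref{th:fastbest} to $A$ with the \emph{new} approximation $Z' := AR^+R$, which has rank $r$. The crucial step is to identify the weights $W = \hat V^{-1} V$ produced by the theorem (with $V$ the right singular vectors of $Z'$) with $\hat A^{-1} R$. Since $Z'$ has the same row space as $R$, the orthonormal basis $V$ of that row space satisfies $R = TV$ for some invertible $T \in \mathbb{C}^{r\times r}$; restricting to the selected column indices gives $\hat A = T \hat V$, whence $V = \hat V \hat A^{-1} R$ and therefore $W = \hat V^{-1} V = \hat A^{-1} R$ and $CW = C \hat A^{-1} R$. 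Theorem \ref{th:fastbest} applied to $(A, Z')$ together with the first-stage estimate $\|A - Z'\|_F \leq \sqrt{r+1}\|A-Z\|_F$ then delivers \eqref{eq:carf}, while the chain $\|A - CW\|_2^2 \leq \|A - Z'\|_2^2 + r\|A - Z'\|_F^2 \leq \|A-Z\|_2^2 + r(r+2)\|A-Z\|_F^2$ combined with \eqref{eq:tildeaf2} yields \eqref{eq:car2}. The only nonroutine step is the weight identification $W = \hat A^{-1} R$; everything else is projection arithmetic layered on top of Theorem \ref{th:fastbest}.
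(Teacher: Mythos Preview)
Your argument is essentially the paper's, and the first half \eqref{eq:ccarrf}--\eqref{eq:ccarr2} matches almost verbatim: the paper uses the same orthogonal splitting $A-CC^+AR^+R=(I-CC^+)A+CC^+(A-AR^+R)$ and then substitutes the two independent applications of Theorem~\ref{th:fastbest}.

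For \eqref{eq:carf}--\eqref{eq:car2} there is one harmless variation and one small gap. The variation: you take the second-stage approximation $Z'=AR^+R$, whereas the paper takes $\Phi=U\hat U^{-1}R$ (the weighted row approximation produced by Theorem~\ref{th:fastbest}). Both have row space equal to the row space of $R$, so your weight identification $W=\hat V^{-1}V=\hat A^{-1}R$ is exactly the paper's identity $\hat V_\Phi^{-1}V_\Phi=\hat A^{-1}R$; either choice works and neither costs more than $O(MNr)$.

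The gap is in your last step for the spectral bound. From $\|A-CW\|_2^2\le\|A-Z\|_2^2+r(r+2)\|A-Z\|_F^2$ you invoke \eqref{eq:tildeaf2} to pass to \eqref{eq:car2}, but \eqref{eq:tildeaf2} bounds $\|\tilde A\|_F$, not $\|A-Z\|_F$; in general $\mathrm{rank}(A-Z)$ can be $\min(M,N)$, so $\|A-Z\|_F^2\le(\min(M,N)-r)\|A-Z\|_2^2$ is false. The paper avoids this by bounding $\|A-\Phi\|_F$ \emph{directly} in terms of $\|A-Z\|_2$ via the $\tilde A$-level estimates (``an analogue of \eqref{eq:tildeaf2}''), obtaining $\|A-\Phi\|_F^2\le(r+1)(\min(M,N)-r)\|A-Z\|_2^2$. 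Your route admits the same fix: replace $\|A-Z'\|_2^2$ and $\|A-Z'\|_F^2$ by their bounds in terms of the first-stage $\tilde A$ (coming from the row selection), and only then apply \eqref{eq:tildeaf2}. With that correction your proof goes through and gives exactly \eqref{eq:car2}.
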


\begin{proof}
First, let us prove the inequalities \eqref{eq:ccarrf} and \eqref{eq:ccarr2}:
\begin{equation}\label{eq:ccarrproof}
\begin{aligned}
  \left\| A - C C^+ A R^+ R \right\|_{2,F}^2 & = \left\| A - CC^+ A + CC^+ A - C C^+ A R^+ R \right\|_{2,F}^2 \\
  & \leqslant \left\| A - CC^+ A \right\|_{2,F}^2 + \left\| CC^+ \left( A - A R^+ R \right) \right\|_{2,F}^2 \\
  & \leqslant \left\| A - CC^+ A \right\|_{2,F}^2 + \left\| A - A R^+ R \right\|_{2,F}^2.
\end{aligned}
\end{equation}

To choose the columns $C$ we can directly use Theorem \ref{th:fastbest}. To select the rows, we can apply Theorem \ref{th:fastbest} to $A^T$. Substituting the corresponding estimates into \eqref{eq:ccarrproof}, we obtain \eqref{eq:ccarrf} and \eqref{eq:ccarr2}.

The inequality \eqref{eq:carf} can be proved in the same way as the column case is generalized to the cross case in \cite{meavg}. Namely, we can introduce a row approximation $\Phi = U \hat U^{-1}R$, where $U \in \mathbb{C}^{r \times N}$ is the matrix of the left singular vectors of $Z$. For the row approximation the estimates of the Theorem \ref{th:fastbest} are also valid. In particular, we obtain
\[
  \left\| A - \Phi \right\|_F \leqslant \sqrt{r+1} \left\| A - Z \right\|_F
\]
and (using an analogue of \eqref{eq:tildeaf2})
\[
  \left\| A - \Phi \right\|_F \leqslant \sqrt{\left( r+1 \right) \left( \min \left( M, N \right) - r \right)} \left\| A - Z \right\|_2.
\]

On the other hand, using Theorem \ref{th:fastbest} now for $Z = \Phi$, we get, using \eqref{eq:cw2},
\begin{equation}\label{eq:acvvf}
  \left\| A - C \hat V_{\Phi}^{-1} V_{\Phi} \right\|_F \leqslant \sqrt{r+1} \left\| A - \Phi \right\|_F \leqslant \left( r + 1 \right) \left\| A - Z \right\|_F
\end{equation}
and
\begin{equation}\label{eq:acvv}
\begin{aligned}
  \left\| A - C \hat V_{\Phi}^{-1} V_{\Phi} \right\|_2^2 & \leqslant \left\| A - \Phi \right\|_2^2 + r \left\| A - \Phi \right\|_F^2 \\
  & \leqslant \left(1 + r \left( \min\left(M, N \right) - r \right)\right) \left\| A - Z \right\|_2^2 + r \left( r + 1 \right) \left( \min\left(M, N \right) - r \right) \left\| A - Z \right\|_2^2 \\
  & = \left(1 + r \left( r + 2 \right) \left( \min\left(M, N \right) - r \right)\right) \left\| A - Z \right\|_2^2,
\end{aligned}
\end{equation}
where $V_{\Phi} \in \mathbb{C}^{r \times N}$ is the submatrix of the first $r$ right singular vectors of $U \hat U^{-1} R = \Phi = U_{\Phi} \Sigma_{\Phi} V_{\Phi}$. Like before, $\;\hat{}\;$ denotes the submatrix, corresponding to rows $R$ and/or columns $C$.

Now we only need to notice, that
\[
  \hat V_{\Phi}^{-1} V_{\Phi} = \left( \hat U_{\Phi} \Sigma_{\Phi} \hat V_{\Phi} \right)^{-1} \hat U_{\Phi} \Sigma_{\Phi} V_{\Phi} = \left( \hat U \hat U^{-1} \hat A \right)^{-1} \hat U \hat U^{-1} R = \hat A^{-1} R,
\]
so
\[
  A - C \hat V_{\Phi}^{-1} V_{\Phi} = A - C \hat A^{-1} R,
\]
and \eqref{eq:carf} and \eqref{eq:car2} follow from \eqref{eq:acvvf} and \eqref{eq:acvv}.
\end{proof}
\begin{cons}
Using $Z = A_r$, we get
\begin{equation}\label{eq:caropt}
  \left\| A - C \hat A^{-1} R \right\|_F \leqslant \left( r+ 1 \right) \left\| A - A_r \right\|_F,
\end{equation}
which coincides with the best known estimate for skeleton approximations of this form from \cite{me-fnorm}. Previously this bound was proved using averaging over submatrices $\hat A$, where each submatrix is chosen with the probability, proportional to its squared volume. In \cite{Alice2} it was shown how the submatrix, which satisfies \eqref{eq:caropt}, can be found in $O \left( M^2 N r \right)$ operations. Theorem \ref{th:fastcross} again requires the knowledge of $Z = A_r$ to get the best bound, which leads to the total complexity $O \left( M N \min \left( M, N \right) \right)$.
\end{cons}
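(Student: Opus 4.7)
The plan is to deduce the corollary as a direct instantiation of Theorem \ref{th:fastcross}. Concretely, I would set $Z = A_r$ and invoke inequality \eqref{eq:carf}. Because the right-hand side of \eqref{eq:carf} reads $(r+1)\|A-Z\|_F$ for an arbitrary rank-$r$ input $Z$, the substitution $Z = A_r$ immediately produces \eqref{eq:caropt}. No additional algebra and no further orthogonalization argument are needed beyond what was already carried out inside the proof of Theorem \ref{th:fastcross}, in particular the identification $\hat V_\Phi^{-1}V_\Phi = \hat A^{-1}R$ which makes the skeleton interpretation of the output correct.

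For the complexity assertion I would separate the two contributions. First, once a rank-$r$ approximation $Z$ is available, Theorem \ref{th:fastcross} constructs the rows $R$ and the columns $C$ in $O(MNr)$ operations: this follows by applying Algorithm \ref{alg:CCA} twice (once to $A$ and once to $A^T$ through the auxiliary row approximation $\Phi$). Second, producing the specific $Z = A_r$ requires a truncated singular value decomposition of $A$, whose cost is $O(MN\min(M,N))$. The SVD cost dominates, giving the claimed total complexity. I would then briefly contrast with the $O(M^2 N r)$ complexity of \cite{Alice2} to highlight the speedup, and recall that \cite{me-fnorm} obtained the same constant $r+1$ only in expectation via squared-volume sampling, so our procedure matches the best known bound deterministically.

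There is no real obstacle: the corollary is a bookkeeping consequence of Theorem \ref{th:fastcross}. The only conceptual point worth making explicit is that $A_r$ is the minimizer of $\|A-Z\|_F$ over all rank-$r$ matrices $Z$ by Eckart--Young, so $Z=A_r$ is exactly the choice that turns the generic bound $(r+1)\|A-Z\|_F$ of \eqref{eq:carf} into the sharpest available form $(r+1)\|A-A_r\|_F$; any other rank-$r$ choice of $Z$ would only weaken the right-hand side.
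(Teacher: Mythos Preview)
Your proposal is correct and matches the paper's treatment: the corollary is stated without a separate proof precisely because it is the immediate specialization $Z=A_r$ of inequality \eqref{eq:carf} in Theorem~\ref{th:fastcross}, together with the observation that computing $A_r$ via an SVD costs $O(MN\min(M,N))$ and dominates the $O(MNr)$ cost of the column/row selection. Your additional remarks (Eckart--Young optimality of $A_r$, the role of $\hat V_\Phi^{-1}V_\Phi=\hat A^{-1}R$) are accurate elaborations but not required.
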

\begin{remark}
As far as we know, the estimate \eqref{eq:car2} is the best spectral norm estimate for approximations of the form $C \hat A^{-1} R$. Estimates for $CUR$ approximations with $U = \hat A^{-1}$ are especially important to study, since approximations of this kind are constructed using rank-revealing LU factorizations \cite{strongRRLU,Pan,merholoc}, adaptive cross approximation \cite{BebAlg} and by using the {\myfont maxvol} algorithm \cite{maxvol}.
\end{remark}

Finally, it is worth noting that the estimate for the spectral norm similar to \eqref{eq:car2} can be obtained faster if we use (for $M<N$) $\Phi^T = QR, \; Q\in \mathbb{C}^{M\times r}, \;R \in\mathbb{C}^{r\times N}$ from an incomplete $QR$ factorization based on columns corresponding to a submatrix of locally maximum volume. The algorithm for finding such factorization was proposed in \cite{rrqr}. Here we will need an estimate for the spectral norm error of the incomplete $QR$ factorization.

\begin{theorem}[\cite{rrqr}]\label{th:rrqr}
In $O \left( MNr \left( 1 + \log_{\rho} r \right) \right)$ operations it is possible to construct an incomplete $QR$ factorization of $A = QR \in \mathbb{C}^{M \times N}, \; Q \in \mathbb{C}^{M \times r}, \; R \in \mathbb{C}^{r \times N}$, such that
\[
  \left\| A - QR \right\|_2 \leqslant \sqrt{1 + \rho^2 r \left( N - r \right)} \left\| A - A_r \right\|_2.
\]
\end{theorem}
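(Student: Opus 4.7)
The plan is to obtain this bound via a strong rank-revealing QR factorization in the style of Gu and Eisenstat: start from a pivoted QR and then locally improve the choice of leading columns by swaps that strictly increase the leading $r\times r$ determinant, until no swap gives improvement by more than a factor $\rho$. The residual bound then follows from the ``no profitable swap remains'' condition.

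First I would compute an initial Householder QR with greedy column pivoting, producing a permutation $P$ and an orthogonal $\tilde Q$ such that $AP = \tilde Q \left[ \begin{smallmatrix} R_{11} & R_{12} \\ 0 & R_{22} \end{smallmatrix} \right]$ with $R_{11}\in\mathbb{C}^{r\times r}$ upper triangular. The incomplete factorization is $QR := \tilde Q_{1:M,1:r}\,[R_{11}\ R_{12}]$, so since $\tilde Q$ is unitary,
\[
  \|A - QR\|_2 = \|R_{22}\|_2.
\]
The task reduces to bounding $\|R_{22}\|_2$ in terms of $\sigma_{r+1}(A) = \|A-A_r\|_2$.

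Next I would introduce the swap phase. For any pair $i\leq r<j$, swapping column $i$ with column $j$ changes $|\det R_{11}|$ by an explicit factor
\[
  \sqrt{\,|\gamma_{ij}|^2 + \omega_i(R_{11})^{-2}\,\|R_{22}(:,j)\|_2^2\,}\,,
\]
where $\gamma_{ij}$ is the $(i,j)$-entry of $R_{11}^{-1}R_{12}$ and $\omega_i(R_{11})$ is the reciprocal of the $i$-th row norm of $R_{11}^{-1}$; this factor can be evaluated and the factorization updated in $O((M+N)r)$ time. I perform any swap that multiplies $|\det R_{11}|$ by more than $\rho$, and repeat. Since $|\det R_{11}|\leq \sigma_1(A)\cdots\sigma_r(A)$ is bounded above while each accepted swap multiplies it by at least $\rho$, and a volume counting argument starting from the initial pivoted-QR determinant bounds the total number of swaps by $O(r\log_\rho r)$, the overall cost is $O(MNr(1+\log_\rho r))$ as claimed.

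Finally, when the loop terminates, every pair $(i,j)$ satisfies
\[
  |\gamma_{ij}|^2 + \omega_i(R_{11})^{-2}\,\|R_{22}(:,j)\|_2^2 \leq \rho^2,
\]
so in particular $\|R_{22}(:,j)\|_2 \leq \rho\,\omega_i(R_{11})$ for all $i\leq r$ and $\omega_i(R_{11})\leq \sigma_{r+1}(A)$ (by the standard interlacing-plus-determinantal identity $\sigma_r(R_{11})\geq \sigma_r(A)/\sqrt{1+\rho^2 r(N-r)}$ combined with the complementary bound on $R_{22}$). Combining the $N-r$ column bounds and the $r$ row bounds through $\|R_{22}\|_2^2 \leq \sigma_{r+1}(A)^2 + \rho^2\,r(N-r)\,\sigma_{r+1}(A)^2$ yields
\[
  \|A - QR\|_2 = \|R_{22}\|_2 \leq \sqrt{1+\rho^2 r(N-r)}\,\|A-A_r\|_2.
\]

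The main obstacle is the last step: extracting a clean spectral-norm bound on $R_{22}$ from only the entrywise/columnwise inequalities guaranteed by the termination criterion. This requires the Gu--Eisenstat-style identity relating $\sigma_1(R_{22})$ and $\sigma_r(R_{11})$ to an $(r+1)\times(r+1)$ subdeterminant of $AP$, together with a careful bookkeeping of how $\rho$ propagates through the row-norm bounds on $R_{11}^{-1}$; everything else (the pivoted QR, the rank-one update after a swap, and the termination counting) is routine.
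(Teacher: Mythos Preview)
The paper does not prove this theorem; it is quoted verbatim as an external result from \cite{rrqr} (Gu--Eisenstat strong rank-revealing QR), with the stated complexity bound attributed separately to \cite{mearxiv,dominant}. Your sketch is precisely the Gu--Eisenstat swap-based algorithm, so in that sense it matches the cited source rather than anything the present paper does.

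One genuine issue in your write-up: the final step is circular. You justify $\omega_i(R_{11})\leq\sigma_{r+1}(A)$ by invoking $\sigma_r(R_{11})\geq\sigma_r(A)/\sqrt{1+\rho^2 r(N-r)}$, but that inequality is itself a conclusion of the theorem, proved simultaneously with the $R_{22}$ bound from the same determinantal argument; and in any case $\omega_i(R_{11})$ is bounded \emph{below} by $\sigma_r(R_{11})$, not above. The actual Gu--Eisenstat route is: the termination condition gives entrywise bounds $|(R_{11}^{-1}R_{12})_{ij}|\leq\rho$ and $\chi_j(R_{22})/\omega_i(R_{11})\leq\rho$, and then a direct min--max / interlacing argument on the full triangular factor (augmenting $R_{11}$ by one column of $R_{12}$ and one row of $R_{22}$) yields $\sigma_1(R_{22})\leq\sqrt{1+\rho^2 r(N-r)}\,\sigma_{r+1}(A)$ without first passing through the $R_{11}$ bound. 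Your acknowledgment that this is the ``main obstacle'' is accurate; the rest of the plan is sound.

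A minor point on complexity: the $O(r\log_\rho r)$ swap count requires the initial pivoted QR to already give $|\det R_{11}|$ within a factor $r^{O(r)}$ of the maximum, which is true but is the content of the references the paper cites after the theorem statement, not of \cite{rrqr} itself.
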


The corresponding complexity bound was obtained in \cite{mearxiv,dominant}.

Applying Theorem \ref{th:rrqr} for $M \times (r+1)$ matrices that are extensions of $C$ by one column (when approximating $A^T$), we get in total for all columns
\[
  \left\| A^T - QR \right\|_F^2 \leqslant \left( 1 + \rho^2 r \left( r + 1 - r \right) \right) \left( M - r \right) \left\| A - A_r \right\|_2^2 \leqslant \left( 1 + \rho^2 r \right) \left( M - r \right) \left\| A - A_r \right\|_2^2.
\]

Similar to the proof of \eqref{eq:acvv}, using $\Phi^T = CW = QR$ to approximate $A^T$ leads to
\[
  \left\| A - C \hat A^{-1} R \right\|_2^2 = \left\| A - C \hat V_{\Phi}^{-1} V_{\Phi} \right\|_2^2 \leqslant \left( 1 + r \left( \rho^2 r + \rho^2 + 1 \right) \left( M - r \right) \right) \left\| A - A_r \right\|_2^2.
\]

Since we assumed $M < N$ (otherwise we can start from approximating $A$ instead of $A^T$), we have just proven the following result.

\begin{theorem}\label{th:fast2norm}
In $O \left( MNr \left( 1 + \log_{\rho} r \right) \right)$ operations it is possible to construct a skeleton approximation of $A \in \mathbb{C}^{M \times N}$ based on rows $R \in \mathbb{C}^{r \times N}$, columns $C \in \mathbb{C}^{M \times r}$ and the submatrix at their intersection $\hat A \in \mathbb{C}^{r \times r}$, such that
\[
  \left\| A - C \hat A^{-1} R \right\|_2 \leqslant \sqrt{1 + r \left( \rho^2 r + \rho^2 + 1 \right) \left( \min \left( M, N \right) - r \right)} \left\| A - A_r \right\|_2.
\]
\end{theorem}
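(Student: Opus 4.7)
The plan is to chain the two earlier theorems established in this paper. Without loss of generality I assume $M < N$ (otherwise I would transpose and work with $A^T$). The first step is to apply Theorem \ref{th:rrqr} to $A^T \in \mathbb{C}^{N \times M}$ to build an incomplete factorization $\Phi^T = QR$ whose $r$ selected columns come from a submatrix of locally maximum volume. Direct use of Theorem \ref{th:rrqr} gives the spectral bound $\|A^T - QR\|_2^2 \leq (1 + \rho^2 r(M - r))\|A - A_r\|_2^2$; applying it instead to each $M \times (r+1)$ submatrix formed by adjoining one extra column to the $r$ already selected and summing the resulting $(1 + \rho^2 r)\sigma_{r+1}^2$ bound over the $M - r$ unselected columns yields the Frobenius bound $\|A^T - QR\|_F^2 \leq (1 + \rho^2 r)(M - r)\|A - A_r\|_2^2$.

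Next I would set $\Phi = (QR)^T$, a rank-$r$ row approximation of $A$, and feed it into Theorem \ref{th:fastbest} as the input $Z$. This yields columns $C$ of $A$ together with weights $W = \hat V_\Phi^{-1} V_\Phi$ obeying the estimate \eqref{eq:cw2}, namely $\|A - CW\|_2^2 \leq \|A - \Phi\|_2^2 + r\|A - \Phi\|_F^2$. Substituting the two previous bounds collapses the right-hand side to the target constant. The final ingredient is to recognize the weights: writing $\Phi$ both as $U \hat U^{-1} R$ and as $U_\Phi \Sigma_\Phi V_\Phi$, the identity $\hat V_\Phi^{-1} V_\Phi = \hat A^{-1} R$ is the same cancellation already used at the end of the proof of Theorem \ref{th:fastcross}, so $CW = C \hat A^{-1} R$.

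The main thing to double-check is the arithmetic: that $1 + \rho^2 r(M - r) + r(1 + \rho^2 r)(M - r)$ rearranges to $1 + r(\rho^2 r + \rho^2 + 1)(M - r)$, which is a short factoring step. For the complexity, Theorem \ref{th:rrqr} dominates at $O(MNr(1 + \log_\rho r))$, while Theorem \ref{th:fastbest} only contributes $O(MNr)$, so the total matches the statement. There is no genuine obstacle here: essentially all the work is already contained in the two cited theorems, and the proof is a routine chaining of them along the lines of the discussion immediately preceding the theorem statement.
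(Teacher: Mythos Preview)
Your proposal is correct and follows essentially the same route as the paper: RRQR on $A^T$ (assuming $M<N$) to produce $\Phi$, the $(r{+}1)$-column trick to convert the spectral RRQR guarantee into the Frobenius bound $(1+\rho^2 r)(M-r)\|A-A_r\|_2^2$, then Theorem~\ref{th:fastbest} with $Z=\Phi$ and the same $\hat V_\Phi^{-1}V_\Phi=\hat A^{-1}R$ identification from Theorem~\ref{th:fastcross}. Your arithmetic check and complexity accounting match the paper's.
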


Usually, in order to guarantee the accuracy of the skeleton approximation in the spectral norm, algorithms search for a submatrix $\hat A$ of locally maximum volume \cite{strongRRLU,Pan,merholoc}. However, such an approach can lead to an error that is about $\sqrt{MN}$ larger than the optimal.

Consider a simple example:
\[
  A = \left[ {\begin{array}{*{20}{c}}
   {1 + \varepsilon } & 1 &  \cdots  & 1  \\ 
   1 & 1 &  \cdots  & 1  \\ 
    \vdots  &  \vdots  &  \ddots  &  \vdots   \\ 
   1 & 1 &  \cdots  & 1  \\ 
\end{array} } \right] \in \mathbb{R}^{N \times N},
\]
Let us look for a good rank 1 skeleton approximation. Algorithms from \cite{strongRRLU,Pan,merholoc} are going to choose the maximum volume submatrix $\hat A = A_{11}$. The error of such approximation is going to be
\[
  \left\| A - A_{:,1} A_{11}^{-1} A_{1,:} \right\|_2 = \left( N - 1 \right) \left( 1 - \frac{1}{1 + \varepsilon} \right),
\]
since each element outside the first row and column is going to be $1 - \frac{1}{1 + \varepsilon}$. Therefore, the error grows linearly with matrix size.

If we use the first row (which has maximum volume), but then choose the column, which minimizes Frobenius norm error, then we get
\[
  \left\| A - A_{:,2} A_{12}^{-1} A_{1,:} \right\|_2 = \varepsilon \sqrt{N-1},
\]
since the error is going to be nonzero only in the first column. This approximation can be achieved using the algorithm from Theorem \ref{th:fast2norm}.

Finally, if we know the approximation
\[
  Z = \left[ {\begin{array}{*{20}{c}}
   1 & 1 &  \cdots  & 1  \\ 
   1 & 1 &  \cdots  & 1  \\ 
    \vdots  &  \vdots  &  \ddots  &  \vdots   \\ 
   1 & 1 &  \cdots  & 1  \\ 
\end{array} } \right] \in \mathbb{R}^{N \times N},
\]
in advance, then using Theorem \ref{th:fastcross} we can select the second row and the second column, which leads to
\[
  \left\| A - A_{:,2} A_{22}^{-1} A_{2,:} \right\|_2 = \varepsilon,
\]
but to achieve it we needed to know the approximation $Z$, which is close to truncated SVD.

\section{Quick search for a highly nondegenerate submatrix}

The column search method from Theorem \ref{th:fastbest} can also be used to search for highly nondegenerate submatrices. In particular, let us be given the rows $V \in \mathbb{C}^{r \times N}, \; VV^* = I$ and we need to find a submatrix $\hat V \in \mathbb{C}^{r \times r}$, such that $\left\| \hat V^{-1} \right\|_2$ is limited. In \cite{trn_min} it is shown that the maximum volume submatrix, $\cV \left( \hat V \right) = \left| \det \hat V \right|$, the following inequality holds
\begin{equation}\label{eq:maxvol2norm}
  \left\| \hat V^{-1} \right\|_2 \leqslant \sqrt{1 + r \left( N-r \right)}.
\end{equation}
The same proof in \cite{trn_min} also leads to the following inequality
\begin{equation}\label{eq:maxvolfnorm}
  \left\| \hat V^{-1} \right\|_F \leqslant \sqrt{r \left( N - r + 1 \right)}.
\end{equation}
Unfortunately, finding maximum volume submatrix is NP-hard \cite{NPhard}. In practice, instead of it, the so-called dominant submatrices are used, which can be found using $\operatorname{maxvol}$ \cite{maxvol} algorithm. However, even in this case, it is possible to prove that estimates \eqref{eq:maxvol2norm}-\eqref{eq:maxvolfnorm} can be achieved in time linear in $N$ only up to a constant $c > 1$ \cite{dominant}. Here we show that the same estimates can be achieved using a greedy selection of a subset of columns in just $O \left( Nr^2 \right)$ operations. It is worth noting that a simple greedy volume maximization (like $QR$ decomposition with column pivoting) cannot guarantee such estimates, and the difference from \eqref{eq:maxvol2norm} can be about $2^r$ times \cite{rrqr}.

Although we achieve good spectral and Frobenius norm estimates, let us note in advance that these bounds does not guarantee that the selected submatrix $\hat V$ has close to maximum volume.

\begin{theorem}\label{th:pseudonorms}
Let $\hat V \in \mathbb{C}^{r \times N}$. Then in $O(Nr^2)$ operations it is possible to find a submatrix $\hat V \in \mathbb{C}^{r \times r}$ of matrix $V$, such that
\begin{equation}\label{eq:ufres}
  \left\| \hat V^{-1} \right\|_F \leqslant \sqrt{r \left( N - r + 1 \right)}
\end{equation}
and
\begin{equation}\label{eq:u2res}
  \left\| \hat V^{-1} \right\|_2 \leqslant \sqrt{1 + r \left( N - r \right)}.
\end{equation}
\end{theorem}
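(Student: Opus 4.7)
The plan is to invoke Theorem~\ref{th:fastbest} with the special choice $A = I_N$ and $Z = V^* V$. Since $V V^* = I$, the matrix $Z$ is a Hermitian rank-$r$ projector whose right singular vectors are exactly the rows of $V$, so running the procedure of Theorem~\ref{th:fastbest} on this pair selects $r$ standard basis columns of $I_N$ (an index set $S \subset [N]$ of size $r$) and outputs weights $W = \hat V^{-1} V$ with $\hat V = V_{:,S}$ --- exactly the submatrix we want to control.

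The first step is to convert the column-approximation bounds of Theorem~\ref{th:fastbest} into bounds on $\|\hat V^{-1}\|$. Inspecting $I_N - CW$ column by column, the $j$-th column vanishes for $j \in S$, while for $j \notin S$ it carries a $1$ at position $j$ and $-\hat V^{-1} v_j$ at positions in $S$. Combined with the identity $\|\hat V^{-1}\|_{2,F}^2 = \|\hat V^{-1} V\|_{2,F}^2$ (which follows from $V V^* = I$), this yields
\[
  \|I_N - CW\|_F^2 = (N - 2r) + \|\hat V^{-1}\|_F^2, \qquad \|I_N - CW\|_2^2 = \|\hat V^{-1}\|_2^2.
\]
Since $\|I_N - V^*V\|_F^2 = N - r$ and $\|I_N - V^*V\|_2 = 1$, Theorem~\ref{th:fastbest} supplies $\|I_N - CW\|_F^2 \leqslant (r+1)(N-r)$ and $\|I_N - CW\|_2^2 \leqslant 1 + r(N-r)$; substituting gives $\|\hat V^{-1}\|_F^2 \leqslant r(N-r+1)$ and $\|\hat V^{-1}\|_2^2 \leqslant 1 + r(N-r)$, which are exactly \eqref{eq:ufres}--\eqref{eq:u2res}.

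The main obstacle will be the complexity. Applied to $A = I_N$ verbatim, Theorem~\ref{th:fastbest} would spend $O(N^2 r)$ operations just to maintain the $N \times N$ matrix $\tilde A^{(k-1)} = I - V^*V - C^{(k-1)} W^{(k-1)}$. To reach $O(Nr^2)$ this matrix must never be formed: one stores only the Householder-rotated $V^Q \in \mathbb{C}^{r \times N}$ (updated in $O(Nr)$ per step) together with the length-$N$ vectors $n_j := \|\tilde A^{(k-1)}_{:,j}\|_2^2$ and $m_j := \|V^Q_{k:r,j}\|_2^2$ that feed the selection rule \eqref{eq:jopt}. Using $V V^* = I$ together with the upper-triangular shape of the partial $\hat V^Q$, one shows by induction on $k$ that
\[
  n_j = 1 - \|V^Q_{k:r,j}\|_2^2 + \sum_{l=1}^{k-1} \frac{|V^Q_{l,j}|^2}{|V^Q_{ll}|^2},
\]
which admits a rank-one update of the whole $n$-vector in $O(N)$ per step, matched by an $O(N)$ update of $m$. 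Summed across all $r$ iterations, the total cost is $O(Nr^2)$, and the bounds \eqref{eq:ufres}--\eqref{eq:u2res} are inherited from Theorem~\ref{th:fastbest} via the translation above.
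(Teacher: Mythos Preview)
Your proposal is correct and follows the same high-level strategy that the paper itself announces: instantiate Theorem~\ref{th:fastbest} with $A=I_N$ and right singular vectors $V$, then argue that the resulting algorithm can be run in $O(Nr^2)$ because one only needs the column-norm vectors $n_j,m_j$ rather than the full $N\times N$ residual. Your translations $\|I-CW\|_F^2=(N-2r)+\|\hat V^{-1}\|_F^2$ and $\|I-CW\|_2=\|\hat V^{-1}\|_2$ are both right, and your closed form for $n_j$ is equivalent to the paper's quantity $1+l_j$ via the identity $n_j=(1+l_j)-m_j$, so your selection rule coincides with the paper's criterion~\eqref{eq:jhatv}.

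The one genuine methodological difference is how the spectral bound~\eqref{eq:u2res} is obtained. You read it off directly from the spectral estimate~\eqref{eq:cw2} of Theorem~\ref{th:fastbest} together with $\|I-CW\|_2=\|\hat V^{-1}\|_2$. The paper instead first proves the Frobenius bound~\eqref{eq:ufres} by an explicit induction (re-doing the argument of Theorem~\ref{th:fastbest} in this special case) and then deduces~\eqref{eq:u2res} from~\eqref{eq:ufres} via the purely algebraic inequality $\|\hat V^{-1}\|_2^2\leqslant\|\hat V^{-1}\|_F^2-(r-1)$, which uses $\sigma_r(\hat V^{-1})\geqslant 1/\|V\|_2=1$. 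Your route is shorter and avoids the repeated induction; the paper's route is more self-contained and yields the extra observation that~\eqref{eq:u2res} is in fact a consequence of~\eqref{eq:ufres} alone.
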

\begin{proof}
The algorithm that we are going to construct corresponds to choosing $A = I$ in Theorem \ref{th:fastbest} with $V$ being used in place of right singular vectors of $Z$. In this case, the error of the column approximation of $A = I$ directly determines the value of $\left\| \hat V^{-1} \right\|_F$ in the corresponding columns: this approach is used to construct lower bounds for column approximation accuracy \cite{melower}. Here we directly write down the algorithm corresponding to it and prove that it guarantees \eqref{eq:ufres}. And the estimate for the spectral norm \eqref{eq:u2res} directly follows from the estimate for the Frobenius norm:
\[
\begin{aligned}
  \left\| \hat V^{-1} \right\|_2^2 & = \left\| \hat V^{-1} \right\|_F^2 - \sum\limits_{k=2}^r \sigma_k^2 \left( \hat V^{-1} \right) \\
  & \leqslant \left\| \hat V^{-1} \right\|_F^2 - (r-1) \sigma_r^2 \left( \hat V^{-1} \right) \\
  & \leqslant \left\| \hat V^{-1} \right\|_F^2 - (r-1) / \left\| \hat V \right\|_2^2 \\
  & \leqslant \left\| \hat V^{-1} \right\|_F^2 - (r-1) / \left\| V \right\|_2^2 \\
  & = \left\| \hat V^{-1} \right\|_F^2 - r + 1 \\
  & \leqslant r \left( N - r + 1 \right) - r + 1 \\
  & = 1 + r \left( N - r \right).
\end{aligned}
\]

So, let us say that we are currently at the $k$-th step, and we add the $k+1$-st column. In order to simplify its selection, we will use rotations of the matrix $V$ from the left so that after $k$ steps the matrix $\hat V$ has the form
\begin{equation}\label{eq:hatv10}
  \hat V = \left[ {\begin{array}{*{20}{c}}
   {\hat V_1}  \\ 
   {0}  \\ 
\end{array} } \right] \in \mathbb{C}^{r \times k},
\end{equation}
where $\hat V_1 \in \mathbb{C}^{k \times k}$ is a square matrix (which can also be easily made right triangular). This form can be achieved, for example, after $k$ steps of modified Gram-Schmidt, and therefore after $r$ steps we get the total complexity $O \left( N r^2 \right)$.

The partition \eqref{eq:hatv10} corresponds to the partition of the matrix $V$ as
\[
  V = \left[ {\begin{array}{*{20}{c}}
   {V_1}  \\ 
   {V_2}  \\ 
\end{array} } \right] \in \mathbb{C}^{r \times N}, \quad V_1 \in \mathbb{C}^{k \times N}, \; V_2 \in \mathbb{C}^{\left(r - k \right) \times N}.
\]

Let the column $V_j = \left[ {\begin{array}{*{20}{c}}
   {V_{1,j}}  \\ 
   {V_{2,j}}  \\ 
\end{array} } \right] \in \mathbb{C}^r$ of $V$ be added to $\hat V$. We can calculate the Frobenius norm of the pseudoinverse of the new matrix as
\begin{equation}\label{eq:extend}
\begin{aligned}
  \left\| {{{\left[ {\begin{array}{*{20}{c}}
   {\hat V} & {{V_j}}  \\ 
\end{array} } \right]}^ + }} \right\|_F^2 & = \left\| {{{\left[ {\begin{array}{*{20}{c}}
   {{{\hat V}_1}} & {{V_{1,j}}}  \\ 
   0 & {{V_{2,j}}}  \\ 
\end{array} } \right]}^ + }} \right\|_F^2 \\
  & = \left\| {\left[ {\begin{array}{*{20}{c}}
   {\hat V_1^{ - 1}} & {\frac{1}{{\left\| {{V_{2,j}}} \right\|_2^2}}\hat V_1^{ - 1}{V_{1,j}}V_{2,j}^ * }  \\ 
   0 & {\frac{1}{{\left\| {{V_{2,j}}} \right\|_2^2}}V_{2,j}^ * }  \\ 
\end{array} } \right]} \right\|_F^2 \\
  & = \left\| {\hat V_1^{ - 1}} \right\|_F^2 + \frac{1}{{\left\| {{V_{2,j}}} \right\|_2^4}}\left\| {\hat V_1^{ - 1}{V_{1,j}}V_{2,j}^ * } \right\|_F^2 + \frac{1}{{\left\| {{V_{2,j}}} \right\|_2^4}}\left\| {V_{2,j}^ * } \right\|_2^2 \\
  & = \left\| \hat V^+ \right\|_F^2 + \frac{{\left\| {\hat V_1^{ - 1}{V_{1,j}}} \right\|_2^2}}{{\left\| {{V_{2,j}}} \right\|_2^2}} + \frac{1}{{\left\| {{V_{2,j}}} \right\|_2^2}}.
\end{aligned}
\end{equation}

We can select column $j$ using the condition
\begin{equation}\label{eq:jhatv}
  j = \mathop{\arg\min}\limits_{j > k} \left( 1 + \left\| {\hat V_1^{ - 1}{V_{1,j}}} \right\|_2^2 \right) / \| V_{2,j} \|_2^2.
\end{equation}
This choice costs $O \left( N r \right)$ at each step, assuming $\hat V_1^{-1} V_1$ is known. Its update at step $k$ requires $O \left(Nk \right)$ operations, since $\hat V_1$ and $V_1$ change only in one row and column (so the corresponding updates are of rank 1). Thus, the total computational complexity of searching for $r \times r$ submatrix is $O \left( N r^2 \right)$.

Using \eqref{eq:avg} with
\[
  \sum\limits_{j > k} \left\| \hat V_1^{ - 1} V_{1,j} \right\|_2^2 = \left\| \hat V_1^{-1} V_1 \right\|_F^2 - \left\| \hat V_1^{-1} \hat V_1 \right\|_F^2 = \left\| \hat V_1^{-1} \right\|_F^2 - k
\]
and
\[
  \sum\limits_{j > k} \| V_{2,j} \|_2^2 = \| V_{2,j} \|_F^2 = r-k,
\]
we get
\begin{equation}\label{eq:minj}
  \min\limits_j \left( 1 + \left\| {\hat V_1^{ - 1}{V_{1,j}}} \right\|_F^2 \right) / \| V_{2,j} \|_2^2 \leqslant \frac{1 + \mathbb{E}_{j > k} \left\| {\hat V_1^{ - 1}{V_{1,j}}} \right\|_F^2}{\mathbb{E}_{j > k} \left\| V_{2,j} \right\|_2^2} \leqslant \frac{N-2k+\left\| \hat V_1^{-1} \right\|_F^2}{r-k} = \frac{N-2k+\left\| \hat V^+ \right\|_F^2}{r-k}.
\end{equation}

We can now prove \eqref{eq:ufres} by induction, using the following proposition for $k$ columns:
\begin{equation}\label{eq:indhyp}
  \left\| \hat V^+ \right\|_F^2 \leqslant k \frac{N-k+1}{r-k+1}.
\end{equation}
If $k = 0$, we replace the numerator in \eqref{eq:jhatv} with 1 (as it was undefined, when $V_1$ is empty), which leads to choosing column $V_j$ with the highest norm and gives
\[
  \left\| V_{j}^+ \right\|_F^2 \leqslant \frac{1}{\mathbb{E}_j \left\| V_j \right\|_2^2} = \frac{N}{\left\| V \right\|_F^2} = \frac{N}{r}.
\]
Therefore, we have just proved the base case.

When moving to $k+1$ columns, combining \eqref{eq:extend}, \eqref{eq:minj} and \eqref{eq:indhyp} leads to:
\[
\begin{aligned}
  \left\| {{{\left[ {\begin{array}{*{20}{c}}
   {\hat V} & {{V_j}}  \\ 
\end{array} } \right]}^ + }} \right\|_F^2 & \leqslant \left\| \hat V^+ \right\|_F^2 + \frac{N-2k + \left\| \hat V^+ \right\|_F^2}{r-k} \\
  & = \frac{N-2k + \left\| \hat V^+ \right\|_F^2 \left( r - k + 1 \right)}{r-k} \\
  & \leqslant \frac{N-2k + k \frac{N-k+1}{r-k+1} \left( r - k + 1 \right)}{r-k} \\
  & = \frac{N-2k + k \left( N-k+1 \right)}{r-k} \\
  & = \left( k + 1 \right) \frac{N - \left( k+1 \right)}{r - \left( k + 1 \right) + 1}.
\end{aligned}
\]
And, finally, after $r$ steps, we get
\[
  \left\| \hat V^{-1} \right\|_F^2 \leqslant r \left( N - r + 1 \right).
\]
\end{proof}

The corresponding algorithm, which finds $r \times r$ highly nondegenerate submatrix is denoted by Algorithm \ref{alg:nondeg}.

\begin{algorithm}[!ht]
\caption{}
\label{alg:nondeg}
\begin{algorithmic}[1]
\Require{Orthonormal rows $V \in \mathbb{C}^{r \times N}$, $VV^* = I$.}
\Ensure{Rows $V$ are permuted in such a way that $\hat V = V_{1:r,1:r}$ is a submatrix, satisfying Theorem \ref{th:pseudonorms}.}
  \State $l := 0_{N}$
  \For{$k := 1$ \To $r$}
    \State $j = \mathop{\arg\min}\limits_{j \geqslant k} \left( 1 + l_j \right) / \| V_{k:r,j} \|_2^2$
    \State Swap $k$-th and $j$-th columns of $V$
    \State $d_{1:N} := V_{k:r,k}^* V_{k:r,1:N} / \| V_{k:r,k} \|_2$
    \For{$j := 1$ \To $N$}
      \State $l_j := l_j + \left| d_j \right|^2$
    \EndFor
    \State $v := V_{k:r,k}$
    \State $v_1 := v_1 + e^{i \arg v_1} \left\| v \right\|_2$
    \State $v := v / \left\| v \right\|_2$
    \State $V_{k:r,1:N} = V_{k:r,1:N} - 2 vv^* V_{k:r,1:N}$
  \EndFor
\end{algorithmic}
\end{algorithm}

\section{Examples}

Here we check that the suggested Algorithm \ref{alg:CCA} indeed allows reaching high-quality cross approximations. To do that, we will look at a few known examples, where other column selection algorithms may fail: Kahan matrix \cite{Kahan} and an example from \cite{CCAiter}. The latter is the following $5 \times 4$ matrix:
\begin{equation}\label{eq:example}
A = \left[ {\begin{array}{*{20}{c}}
   1 & 1 & 1 & 0  \\ 
   1 & 1 & {1 + \varepsilon } & 0  \\ 
   1 & 0 & 0 & {1 + \varepsilon }  \\ 
   1 & 0 & 0 & 1  \\ 
   0 & 0 & 0 & 1  \\ 
\end{array} } \right]
\end{equation}
The optimal subset of one column here is the 1st column, while the optimal subset of two columns includes the 2nd and the 4th columns. Consequently, greedy approach \cite{GreedyCCA} fails to select optimal columns for $r = 2$, since it selects column 1 on the first step.

On the other hand, when Algorithm \ref{alg:CCA} with $Z$ from truncated SVD is applied to the matrix \eqref{eq:example} (for instance, with $\varepsilon = 0.001$), it first selects the 4th and then the 2nd column (which are optimal), leading to the errors
\[
\begin{aligned}
  \left\| A - CW \right\|_F & = 0.835 \ldots \\
  \left\| A - CC^+ A \right\|_F & = 0.816 \ldots \\
  \left\| A - A_r \right\|_F & = 0.629 \ldots 
\end{aligned}
\]
Our algorithm is similar to greedy approach \cite{GreedyCCA} in the sense that it also only adds columns. The main difference is that Algorithm \ref{alg:CCA} requires the knowledge of the final number of columns $r$. This allows it to select columns, which will be ``good'' with others, selected later.

Kahan matrix is
\begin{equation}\label{eq:Kahan}
A = {\small \left[ {\begin{array}{*{20}{c}}
   1 & 0 &  \cdots  & 0  \\ 
   0 & s &  \ddots  &  \vdots   \\ 
    \vdots  &  \ddots  &  \ddots  & 0  \\ 
   0 &  \cdots  & 0 & {{s^r}}  \\ 
\end{array} } \right]\left[ {\begin{array}{*{20}{c}}
   1 & { - c} &  \cdots  & { - c}  \\ 
   0 & 1 &  \ddots  &  \vdots   \\ 
    \vdots  &  \ddots  &  \ddots  & { - c}  \\ 
   0 &  \cdots  & 0 & 1  \\ 
\end{array} } \right]} \in \mathbb{R}^{(r+1) \times (r+1)}, \quad s^2 + c^2 = 1.
\end{equation}
Here $c$ should be selected close to 1 and $s$ close to 0. This example was constructed to show that incomplete QR decomposition with column pivoting may select columns, which are far from optimal, leading to error, exponentially far from SVD in terms of rank $r$. Namely, the standard column pivoting selects the first $r$ columns, leading to
\[
\begin{aligned}
  \left\| A - CC^+A \right\|_F & = \left\| A - QR \right\|_F = s^{N-1} \geqslant c (1+c)^{r-1} \left\| A - A_r \right\|_F.
\end{aligned}
\]
On the other hand, suggested algorithm always selects columns, which result in approximation error being close to SVD, see Figure \ref{new-fig}. Here the chosen columns were also always optimal: it always selected all columns, except the first one, when $r > 1$.

\begin{figure}[ht]
\centering
\includegraphics[width=0.7\columnwidth]{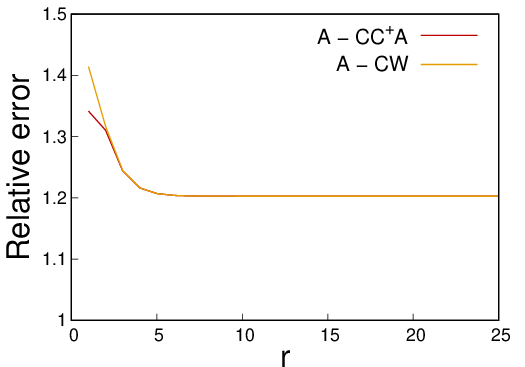}
\caption{
Performance of Algorithm \ref{alg:CCA} for Kahan matrices \eqref{eq:Kahan}. The relative error is computed by dividing the column approximation error by the SVD error. Size of the matrix is always equal to $r+1$ for each $r$. Parameter $c = 0.8$.
}
\label{new-fig}
\end{figure}

\section{Conclusion}

We have found that to construct column and cross approximations of rank $r$ using $r$ rows and columns with an optimal error coefficient, it is sufficient to have an arbitrary approximation of $Z$ of rank $r$ of sufficiently high accuracy. Then after that, in $O \left( MNr \right)$ operations, approximations close to optimal can be constructed both in the Frobenius norm and in the spectral norm.

Using an analog of this column subset selection algorithm applied to orthonormal rows $V \in \mathbb{C}^{r\times N}$, it is possible in $O\left(N r^2\right)$ operations to achieve the same estimates as for submatrices of maximum volume. The resulting submatrix can be used directly instead of searching for a ``good'' submatrix using the $\operatorname{maxvol}$ \cite{maxvol} algorithm.

%\selectlanguage{russian}

\bibliographystyle{unsrt}

\bibliography{engbibfile}

\begin{thebibliography}{10}

\bibitem{CCAlargecoef}
Christos Boutsidis, Michael~W. Mahoney, and Petros Drineas.
\newblock An improved approximation algorithm for the column subset selection
  problem.
\newblock In {\em Proceedings of the 2009 Annual ACM-SIAM Symposium on Discrete
  Algorithms (SODA)}, pages 968--977, 2009.

\bibitem{CCAsvd}
A.~Çivril and M.~Magdon-Ismail.
\newblock Column subset selection via sparse approximation of svd.
\newblock {\em Theoretical Computer Science}, 421:1--14, 2012.

\bibitem{ColumnHard}
Y.~Shitov.
\newblock Column subset selection is np-complete.
\newblock {\em Linear Algebra and its Applications}, 610:52--58, 2021.

\bibitem{CCAstar}
Hiromasa Arai, Crystal Maung, and Haim Schweitzer.
\newblock Optimal column subset selection by a-star search.
\newblock {\em Proceedings of the AAAI Conference on Artificial Intelligence},
  29(1), 2015.

\bibitem{GreedyCCA3}
J.~Altschuler, A.~Bhaskara, G.~Fu, V.~Mirrokni, A.~Rostamizadeh, and
  M.~Zadimoghaddam.
\newblock Greedy column subset selection: New bounds and distributed
  algorithms.
\newblock In {\em Proceedings of The 33rd International Conference on Machine
  Learning}, volume~48, page 2539–2548, 2016.

\bibitem{bestCW}
A.~Deshpande and L.~Rademacher.
\newblock Efficient volume sampling for row/column subset selection.
\newblock In {\em Proceedings of the 42th Annual ACM Symposium on Theory of
  Computing (STOC)}, 2010.

\bibitem{Alice2}
A.~Cortinovis and D.~Kressner.
\newblock Low-rank approximation in the frobenius norm by column and row subset
  selection.
\newblock {\em SIAM Journal on Matrix Analysis and Applications},
  41(4):1651--1673, 2020.

\bibitem{me-fnorm}
N.L. Zamarashkin and A.I. Osinsky.
\newblock On the existence of a nearly optimal skeleton approximation of a
  matrix in the frobenius norm.
\newblock {\em Doklady Mathematics}, 97(2):164--166, 2018.

\bibitem{CWexist}
A.~Deshpande, L.~Rademacher, S.~Vempala, and G.~Wang.
\newblock Matrix approximation and projective clustering via volume sampling.
\newblock {\em Theory of Computing}, 2:225–247, 2006.

\bibitem{newCCA}
V.~Guruswami and A.K. Sinop.
\newblock Optimal column-based low-rank matrix reconstruction.
\newblock {\em arXiv 1104.1732v4 (Submitted on 4 Jan 2012)}, 2012.

\bibitem{meavg}
A.~I. Osinsky and N.~L. Zamarashkin.
\newblock On the accuracy of cross and column low-rank maxvol approximations in
  average.
\newblock {\em Comput. Math. and Math. Phys.}, 61:786–798, 2021.

\bibitem{strongRRLU}
L.~Miranian and M.~Gu.
\newblock Strong rank revealing lu factorizations.
\newblock {\em Linear Algebra and its Applications}, 367:1--16, 2003.

\bibitem{Pan}
C.-T. Pan.
\newblock On the existence and computation of rank revealing lu factorizations.
\newblock {\em Linear Algebra and its Applications}, 316:199–222, 2000.

\bibitem{merholoc}
A.~Osinsky.
\newblock Polynomial time $\rho$-locally maximum volume search.
\newblock {\em Calcolo}, 60(42):??, 2023.

\bibitem{BebAlg}
M.~Bebendorf and S.~Rjasanow.
\newblock Adaptive low-rank approximation of collocation matrices.
\newblock {\em Computing}, 70:1--24, 2003.

\bibitem{maxvol}
S.A. Goreinov, I.V. Oseledets, D.V. Savostyanov, E.E. Tyrtyshnikov, and N.L.
  Zamarashkin.
\newblock How to find a good submatrix.
\newblock In {\em Matrix Methods: Theory, Algorithms, Applications}, pages
  247--256. World Scientific Publishing, 2010.

\bibitem{rrqr}
M.~Gu and S.C. Eisenstat.
\newblock Efficient algorithms for computing a strong rank-revealing qr
  factorization.
\newblock {\em SIAM J. ScI. COMPUT.}, 17(4):848–869, 1996.

\bibitem{mearxiv}
A.~Osinsky.
\newblock Rectangular maximum volume and projective volume search algorithms.
\newblock {\em arXiv 1809.02334 (Submitted on 7 Sep 2018)}, 2018.

\bibitem{dominant}
A.~Osinsky.
\newblock Volume-based subset selection.
\newblock {\em Numerical Linear Algebra with Applications}, ??(??):e2525, 2023.

\bibitem{trn_min}
S.A. Goreinov, E.E. Tyrtyshnikov, and N.L. Zamarashkin.
\newblock A theory of pseudoskeleton approximations.
\newblock {\em Linear Algebra and Its Applications}, 261:1–21, 1997.

\bibitem{NPhard}
A.~\c{C}ivril and M.~Magdon-Ismail.
\newblock Exponential inapproximability of selecting a maximum volume
  sub-matrix.
\newblock {\em Algorithmica}, 65(1):159–176, 2013.

\bibitem{melower}
A.~I. Osinsky.
\newblock Lower bounds for column matrix approximations.
\newblock {\em Comput. Math. and Math. Phys.}, ??:??–??, 2023.

\bibitem{Kahan}
W.~Kahan.
\newblock Numerical linear algebra.
\newblock {\em Canadian Mathematical Bulletin}, 9(5):757–801, 1966.

\bibitem{CCAiter}
B.~O. Rubio, S.~G. Canaval, and A.~Mozo.
\newblock Iterative column subset selection.
\newblock {\em Knowledge and Information Systems}, 54:65--94, 2017.

\bibitem{GreedyCCA}
A.~K. Farahat, A.~Ghodsi, and M.~S. Kamel.
\newblock Efficient greedy feature selection for unsupervised learning.
\newblock {\em Knowledge and Information Systems}, 35:285 -- 310, 2012.

\end{thebibliography}

\end{document}